\newtheorem{theorem}{Theorem}[section]
\newtheorem{lemma}[theorem]{Lemma}
\theoremstyle{definition}
\newtheorem{definition}[theorem]{Definition}
\newtheorem{example}[theorem]{Example}
\numberwithin{equation}{section}
\def\llll{\longrightarrow}
\newcommand{\C}{{\mathbb {C}}}
\newcommand{\N}{{\mathbb {N}}}
\newcommand{\R}{{\mathbb {R}}}
\newcommand{\1}{{\mathds{1}}}
\newcommand{\tnorm}{|\hskip-.07em\|}
\def\nnn{\tnorm \ \tnorm}
\def\supp{{\rm supp \;}}
\def\sep{{ \ \  }}
\def\sem{{\ \ \ \  }}
\def\seg{{\ \ \ \  \ \  }}
\title[Bishop-Phelps-Bollob{\'a}s  property] {Bishop-Phelps-Bollob{\'a}s  property  for positive \\ operators  between classical Banach spaces
}
\author[M.D. Acosta]{Mar\'{\i}a D. Acosta}
\address{Universidad de Granada, Facultad de Ciencias,
	Departamento de An\'{a}lisis Matem\'{a}tico, 18071 Granada, Spain}
\email{dacosta@ugr.es}
\author[M. Soleimani]{Maryam Soleimani-Mourchehkhorti}
\address{School of Mathematics, Institute for Research in Fundamental Sciences (IPM), P.O. Box: 19395-5746, Tehran, Iran}
\email{m-soleimani85@ipm.ir}
\thanks{The  first  author was  supported  by Junta de Andaluc\'{\i}a grant  FQM--185  and also by Spanish MINECO/FEDER grants  MTM2015-65020-P and PGC2018-093794-B-I00. The second author was   supported by a grant from IPM}
\keywords{Banach space, operator, Bishop-Phelps-Bollob{\'a}s  theorem, Bishop-Phelps-Bollob{\'a}s  property.}
\begin{document}

\subjclass[2010]{Primary 46B04; Secondary 47B99.}

  {\large

\dedicatory{Dedicated to the memory of Victor Lomonosov}
\begin{abstract}
	We prove that the class of  positive operators from   $L_\infty (\mu)$ to  $L_1 (\nu)$ has the Bishop-Phelps-Bollobás property for any positive measures $\mu$ and $\nu$. The same result also holds for the  pair $(c_0, \ell_1)$. We also provide an example showing that not every pair of Banach lattices satisfies the Bishop-Phelps-Bollobás property for  positive  operators.
\end{abstract}

\maketitle

   \section{Introduction}
   
   In 1961 Bishop and Phelps proved that for any Banach space the set of (bounded and linear) functionals attaining their  norms is norm dense in the topological dual space \cite{BP}. In 1970 Bollobás gave some quantified version of that result \cite{Bol}.    In order to state such result we recall the following notation.   By     $B_X$, $S_X$ and $X^*$   we denote  the closed unit ball,  the unit sphere
   and the topological dual of a Banach space $X,$ respectively.  If $X$ and $Y$ are  both real or  both complex Banach spaces, $L(X,Y)$ denotes the space of (bounded linear) operators from $X$ to $Y,$ endowed with its usual operator norm.

   \vskip3mm
   
   {\it Bishop-Phelps-Bollob{\'a}s theorem} (see \cite[Theorem 16.1]{BoDu} or \cite[Corollary 2.4]{CKMMR}). Let $X$ be a Banach space and $0< \varepsilon < 1$. Given $x \in B_X$  and $x^* \in S_{X^*}$ with $\vert 1- x^* (x) \vert < \frac{\varepsilon ^2 }{2}$, there are elements $y \in S_X$ and $y^* \in S_{X^*}$  such that $y^* (y)=1$, $\Vert y-x \Vert < \varepsilon$ and $\Vert y^* - x^* \Vert < \varepsilon $.

   After a period  in which a lot of attention has been  devoted to extending Bishop-Phelps theorem  to operators and   interesting results have been  obtained about that topic (see  \cite{Acs}), in 2008 it was posed the problem of extending  Bishop-Phelps-Bollob{\'a}s theorem for operators. 
   
   In order to state some of these extensions it will be convenient to  recall  the following notion.

   \begin{definition}
   	[{\cite[Definition 1.1]{AAGM}}]
   	\label{def-BPBp}
   	Let $X$  and $Y$ be either real or complex Banach spaces. The pair $(X,Y )$ is said to have the Bishop-Phelps-Bollob{\'a}s property for operators (BPBp) if for every $  0 < \varepsilon  < 1 $  there exists $ 0< \eta (\varepsilon) < \varepsilon $ such that for every $S\in S_{L(X,Y)}$, if $x_0 \in S_X$ satisfies
   	$ \Vert S (x_0) \Vert > 1 - \eta (\varepsilon)$, then
   	there exist an element $u_0 \in S_X$  and an operator $T \in S_{L(X,Y )}$ satisfying the following conditions
   	$$
   	\Vert T (u_0) \Vert =1, \sem \Vert u_0- x_0 \Vert < \varepsilon \seg \text{and}
   	\sem \Vert T-S \Vert < \varepsilon.
   	$$
   \end{definition}
   
   If  $X$ and $Y$ are Banach spaces, it  is known that the  pair $(X,Y)$ has the Bishop-Phelps-Bollobás property in the following cases:
   \begin{itemize}
   	\item $X$ and  $Y$ are finite-dimensional spaces \cite[Proposition 2.4]{AAGM}.
   	\item $X$  is any Banach space and $Y$ has the property $\beta $ (of Lindenstrauss) \cite[Theorem 2.2]{AAGM}. The spaces $c_0 $ and $\ell_\infty $ have property $\beta$.
   	\item $X$  is uniformly convex and $Y$ is any Banach space  (\cite[Theorem 2.2]{ABGt} or \cite[Theorem 3.1]{KLc}).
   	\item $X=\ell_1$ and $Y$ has the  approximate hyperplane series property \cite[Theorem 4.1]{AAGM}. For instance, finite-dimensional spaces, uniformly convex spaces,   $C(K)$ and  $L_1 (\mu)$ have the approximate hyperplane series property.
   	\item $X=L_1 (\mu)$ and $Y$ has the Radon-Nikod\'{y}m property and the approximate hyperplane series property, whenever  $\mu$ is any  $\sigma$-finite  measure \cite[Theorem 2.2]{CK1} (see also \cite[Theorem 2.3]{ABGj}).
   	\item $X=L_1(\mu)$ and $Y= L_1 (\nu)$,  for any positive measures $\mu$ and $\nu$ 
   	\cite[Theorem 3.1]{CKLj}.
   	\item $X=L_1(\mu)$ and $Y= L_\infty (\nu)$,  for any positive measure $\mu$ and any localizable positive measure  $\nu$ 
   	\cite[Theorem 4.1]{CKLj} (see also  \cite{ACGM}).
   	\item $X=C(K)$ and $Y= C(S)$ in the real case, where $K$ and $S$ are compact Hausdorff topological spaces \cite[Theorem 2.5]{ABCC}.
   	\item $X=C(K)$ and $Y$ is a uniformly convex Banach space, in the real case \cite[Theorem 2.2]{KL2} (see also  \cite[Corollary 2.6]{KiI} and  \cite[Theorem 5]{KLL}).
   	\item $X=C_0(L)$, for any
   	locally compact Hausdorff topological space $L$ and  $Y$ is a  $\C$-uniformly convex space, in the complex case \cite[Theorem 2.4]{AcB}.   As a consequence, the pair  $(C_0(L),L_p (\mu))$ has the BPBp for any positive measure $\mu$ and $1 \le p < +\infty$. 
   	\item $X=\ell_\infty ^n$ and $Y =L_1 (\mu) $ for any positive integer $n$ and any  positive measure $\mu$  \cite[Corollary 4.5]{AD}  (see also \cite[Theorem 3.3]{AD}, \cite[Theorem 3.3]{ADS} and \cite[Theorem 2.9]{ABGp}).
   	\item $X$ is an Asplund space and  $\mathcal{A} \subset C(K)$ is a  uniform algebra 
   	\cite[Theorem 3.6]{CGK} (see also \cite[Corollary 2.5]{ACK}).
   \end{itemize}
   The paper \cite{Acbc}  contains a survey with most of the results known  about  the Bishop-Phelps-Bollobás property  for operators.
   
   In this short note we introduce a version of Bishop-Phelps-Bollobás property  for positive operators between Banach lattices (see Definition \ref{def-BPBp-pos}).
   The only difference between this property and the previous one is that we assume that the operators appearing in Definition \ref{def-BPBp} are positive. 
   In Section 2 we prove  that the pair $(L_\infty(\mu), L_1 (\nu)) $  has the Bishop-Phelps-Bollobás property for positive operators for any positive measures $\mu$ and $\nu$.  The parallel result for  $(c_0, L_1 (\mu))$  is shown in section 3, for any positive measure $\mu$.  As a consequence,  the subset of positive operators from $ c_0$ to $\ell_1$ satisfies the Bishop-Phelps-Bollobás property. We remark that it is not known  whether   the pairs  $(L_\infty(\mu), L_1 (\nu))$ and  $(c_0, \ell_1)$ satisfy the Bishop-Phelps-Bollobás property for operators in the real case. In both cases the set of norm attaining operators is dense in the space of operators (see \cite[Theorem B]{Scha} for the first case). For the second pair, a necessary condition  on the range space  in order to have the Bishop-Phelps-Bollobás for operators  is known (see \cite[Theorem 3.3]{AD}). We also provide an example showing that not every  pair of Banach lattices satisfies the  Bishop-Phelps-Bollobás property for positive operators.

\section{ Bishop-Phelps-Bollobás property for positive operators for the pair $(L_\infty, L_1)$}

We begin  by recalling some notions and introducing the appropriate notion of  Bishop-Phelps-Bollobás property  for positive operators.
The concepts in the first definition  are standard and can be found, for instance, in \cite{AbAl}.

\begin{definition}
	\label{def-lattice}
	An \textit{ordered vector space} is a real vector space $X$  equipped with a vector space order, that is,  an order relation $\le$  that is compatible with the algebraic structure of $X$.   An ordered vector space is called a  \textit{Riesz space}  if every pair of vectors has a least upper bound  and a greatest lower bound. A norm $\Vert \ \Vert $ on a Riesz space   $X$ is said  to be a   \textit{lattice norm} whenever $|x| \leq |y|$ implies $\Vert x\Vert  \le \Vert y \Vert  $. \textit{A normed  Riesz space} is a  Riesz space equipped  with a lattice norm. A normed Riesz space whose norm is complete  is called a \textit{Banach lattice}.
	
	An operator $T: X \to Y$ between two ordered vector spaces is called \textit{positive} if $x \geq 0$ implies $Tx \geq 0$.
\end{definition}

We remark that throughout this paper by operator we mean a linear mapping.  Recall that every positive operator from a Banach lattice to a normed Riesz space is continuous \cite[Theorem 4.3]{AlBur}.

\begin{definition}
	\label{def-BPBp-pos}
	Let $X$  and $Y$ be    Banach lattices. The pair $(X,Y )$ is said to have the {\it Bishop-Phelps-Bollob{\'a}s property for  positive operators}  if for every $  0 < \varepsilon  < 1 $  there exists $ 0< \eta (\varepsilon) < \varepsilon $ such that for every $S\in S_{L(X,Y)}$, such that $S \ge 0$,  if $x_0 \in S_X$ satisfies
	$ \Vert S (x_0) \Vert > 1 - \eta (\varepsilon)$, then
	there exist an element $u_0 \in S_X$  and a positive  operator $T \in S_{L(X,Y )}$ satisfying the following conditions
	$$
	\Vert T (u_0) \Vert =1, \sem \Vert u_0- x_0 \Vert < \varepsilon \seg \text{and}
	\sem \Vert T-S \Vert < \varepsilon.
	$$
\end{definition}

Let $ (\Omega, \mu) $   be a measure space. We denote by $L_\infty (\mu)$ the space of real valued  measurable essentially bounded functions on $\Omega$  and by $\1$  the constant function equal to $1$ on $\Omega$. Since an element $f$  in  $B_{L_\infty (\mu)}$ satisfies that $ \vert f \vert \le  \1$ a.e.,  it is clear that a positive operator from $L_\infty(\mu)$ to any other Banach lattice satisfies the next result.

\begin{lemma}
	\label{le-pos}
Let $\mu$ and $\nu$ be positive measures and  $T$ a positive operator from 	$ L_\infty (\mu)$ to  $L_1 (\nu)$. Then $\Vert T \Vert = \Vert T(\1)\Vert_1$.
\end{lemma}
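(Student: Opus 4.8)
The plan is to prove the two inequalities separately. Since $\mathbf{1} \in S_{L_\infty(\mu)}$, the inequality $\Vert T(\mathbf{1})\Vert_1 \le \Vert T\Vert$ is immediate from the definition of the operator norm, so the real content is the reverse inequality $\Vert T\Vert \le \Vert T(\mathbf{1})\Vert_1$.

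For the reverse inequality, first I would reduce to the unit ball: by definition $\Vert T\Vert = \sup\{\Vert T(f)\Vert_1 : f \in B_{L_\infty(\mu)}\}$, so it suffices to show that $\Vert T(f)\Vert_1 \le \Vert T(\mathbf{1})\Vert_1$ for every $f \in B_{L_\infty(\mu)}$. As the excerpt already observes, any such $f$ satisfies $|f| \le \mathbf{1}$ a.e. The key step is then to exploit positivity of $T$ to pass this pointwise domination through the operator. Writing the standard inequality $-\mathbf{1} \le f \le \mathbf{1}$ and applying the positive operator $T$ (which is order-preserving and linear, hence satisfies $T(-g) = -T(g)$) yields $-T(\mathbf{1}) \le T(f) \le T(\mathbf{1})$ in the lattice order of $L_1(\nu)$. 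This is equivalent to $|T(f)| \le T(\mathbf{1})$ in $L_1(\nu)$.

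Finally I would invoke the lattice-norm property of $L_1(\nu)$: since $|T(f)| \le T(\mathbf{1})$ and the $L_1$-norm is a lattice norm, we get $\Vert T(f)\Vert_1 \le \Vert T(\mathbf{1})\Vert_1$. Taking the supremum over $f \in B_{L_\infty(\mu)}$ gives $\Vert T\Vert \le \Vert T(\mathbf{1})\Vert_1$, and combined with the trivial inequality this completes the proof. I would note in passing that positivity of $T$ also forces $T(\mathbf{1}) \ge 0$, which is why $|T(f)| \le T(\mathbf{1})$ makes sense as a domination by a positive element; this is automatic here but worth keeping in mind.

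The argument is essentially routine once the right order inequality is set up, so I do not anticipate a genuine obstacle. The only point requiring a small amount of care is justifying the equivalence between the two-sided order bound $-T(\mathbf{1}) \le T(f) \le T(\mathbf{1})$ and the absolute-value bound $|T(f)| \le T(\mathbf{1})$; this is a standard fact in Riesz spaces (the least upper bound of $g$ and $-g$ is $|g|$), and it is exactly the link that lets the lattice-norm hypothesis do its work. Everything else is a direct application of linearity, positivity, and the definition of the operator norm.
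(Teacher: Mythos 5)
Your proof is correct and is essentially the same argument the paper has in mind: the paper gives no formal proof, only the remark preceding the lemma that every $f\in B_{L_\infty(\mu)}$ satisfies $\vert f\vert \le \1$ a.e., from which positivity of $T$ and the lattice norm of $L_1(\nu)$ yield the conclusion exactly as you spell out. Your passage from $-\1 \le f \le \1$ to $\vert T(f)\vert \le T(\1)$ and then to $\Vert T(f)\Vert_1 \le \Vert T(\1)\Vert_1$ is precisely the fleshing-out of that remark.
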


It is trivially satisfied that   $\Vert f+g \Vert _1 =  \Vert f-g \Vert _1$  for any positive   integrable  functions $f$ and $g$  with disjoint supports.
Next result shows that in case that two  functions satisfy the previous assumption and the quantities  $\Vert f+g \Vert _1$ and $\Vert f-g \Vert _1$ are close, then both functions can be approximated  by positive functions with disjoint supports.

\begin{lemma}
	\label{le-dis-supp}
	Let $(\Omega, \mu)$  be a  measure space, $0 < \varepsilon < \dfrac{1}{5}$  and $f_1, f_2 \in L_1 (\mu)$  be  positive functions such that
	$$
	\Vert f_1 + f_2 \Vert _1 \le 1 \sem \text{\rm and} \sem 1- \varepsilon ^2  \le 	\Vert f_1 - f_2 \Vert _1 .
	$$
	Then there are two positive functions $g_1$ and $g_2$  with disjoint supports  in $L_1(\mu)$  and also satisfying that
	$$
	\Vert g_1 + g_2 \Vert _1 =1 	 \sem \text{\rm and} \sem \Vert g_i- f_i \Vert _1 < 7 \varepsilon \sep \text{for}\sep i=1,2  .
	$$
\end{lemma}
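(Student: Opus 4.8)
The plan is to exploit the elementary lattice identity $|f_1 - f_2| = (f_1 + f_2) - 2\,(f_1 \wedge f_2)$, valid for positive functions, which shows that the gap between $\Vert f_1 + f_2 \Vert_1$ and $\Vert f_1 - f_2 \Vert_1$ measures exactly twice the $L_1$-mass of the overlap $h := f_1 \wedge f_2 = \min(f_1, f_2)$. Integrating this identity and inserting the hypotheses $\Vert f_1 + f_2\Vert_1 \le 1$ and $1 - \varepsilon^2 \le \Vert f_1 - f_2\Vert_1$ gives $\Vert h\Vert_1 = \tfrac{1}{2}\bigl(\Vert f_1 + f_2\Vert_1 - \Vert f_1 - f_2\Vert_1\bigr) \le \tfrac{\varepsilon^2}{2}$, so the overlap is small. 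This is the quantitative heart of the statement, and it requires only the positivity of $f_1, f_2$.

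The natural candidates for disjoint positive approximants are then $\tilde g_i := f_i - h = (f_i - f_{3-i})^+$. Since $(f_1 - f_2)^+$ and $(f_2 - f_1)^+$ are supported on the complementary sets $\{f_1 > f_2\}$ and $\{f_2 > f_1\}$, they automatically have disjoint supports; moreover $\Vert \tilde g_i - f_i\Vert_1 = \Vert h\Vert_1 \le \tfrac{\varepsilon^2}{2}$. Disjointness also gives $\Vert \tilde g_1 + \tilde g_2 \Vert_1 = \bigl\Vert\, |f_1 - f_2|\,\bigr\Vert_1 = \Vert f_1 - f_2\Vert_1 =: s$, so that $s \in [1 - \varepsilon^2, 1]$.

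The only remaining step is to repair the normalization from $s$ to exactly $1$, which I would do by rescaling: set $g_i := s^{-1}\tilde g_i$. This is legitimate because $s \ge 1 - \varepsilon^2 > 0$, it preserves both positivity and disjointness of supports, and it yields $\Vert g_1 + g_2\Vert_1 = 1$ as required. The mild point to verify is that rescaling does not spoil the approximation: using $\Vert \tilde g_i\Vert_1 \le s$ (again from disjointness) one finds $\Vert g_i - \tilde g_i\Vert_1 = (s^{-1} - 1)\Vert \tilde g_i\Vert_1 \le 1 - s \le \varepsilon^2$, whence by the triangle inequality $\Vert g_i - f_i \Vert_1 \le (1 - s) + \Vert h\Vert_1 \le \tfrac{3}{2}\varepsilon^2 < 7\varepsilon$ for $0 < \varepsilon < \tfrac{1}{5}$. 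I expect no genuine obstacle: the generous target constant $7\varepsilon$ leaves ample room, and the only care needed is to estimate $\Vert \tilde g_i\Vert_1$ by $s$ rather than merely by $1$, and to note that the scaling factor $s^{-1}$ stays bounded since $s$ is bounded away from $0$.
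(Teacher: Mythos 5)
Your proof is correct, and it takes a genuinely different route from the paper's. The paper's argument is a cut-and-truncate one: it introduces the set $W=\{t: \vert f_1(t)-f_2(t)\vert \le (1-\varepsilon)(f_1(t)+f_2(t))\}$, shows $\int_W (f_1+f_2)\,d\mu \le \varepsilon$, splits the complement into $G_1,G_2$ according to which function dominates, checks that the cross masses $\int_{G_1}f_2\,d\mu$ and $\int_{G_2}f_1\,d\mu$ are at most $\varepsilon$, and finally takes $g_i$ to be the normalized restrictions $f_i\chi_{G_i}$; each step loses $O(\varepsilon)$, which is where the constant $7\varepsilon$ and the hypothesis $\varepsilon<\tfrac15$ come from. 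You instead remove the overlap exactly, via the pointwise identity $\vert f_1-f_2\vert = (f_1+f_2)-2(f_1\wedge f_2)$, which gives $\Vert f_1\wedge f_2\Vert_1 \le \tfrac{\varepsilon^2}{2}$ directly from the hypotheses; the positive parts $(f_i-f_{3-i})^+$ are then disjointly supported for free, and only the final rescaling by $s=\Vert f_1-f_2\Vert_1 \ge 1-\varepsilon^2$ costs anything further. All your estimates check out (in particular $\Vert \tilde g_i\Vert_1\le s$, so the rescaling loses at most $1-s\le\varepsilon^2$), and your argument is both cleaner and quantitatively stronger: you obtain $\Vert g_i-f_i\Vert_1\le \tfrac32\varepsilon^2$, an $O(\varepsilon^2)$ bound rather than $O(\varepsilon)$, with no restriction on $\varepsilon$ at all. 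Since the lemma is invoked as a black box in Theorems \ref{teo-BPBp-L-infty-L1} and \ref{teo-BPBp-c0-L1} (only disjointness of supports, the normalization $\Vert g_1+g_2\Vert_1=1$, and the distance estimates are used there), your version would even allow slightly better constants in those proofs. The only structural difference is that the paper's $g_i$ are scalar multiples of restrictions $f_i\chi_{G_i}$ of the original functions, whereas yours are the rescaled functions $(f_i-f_{3-i})^+$; nothing in the applications exploits that extra structure.
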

\begin{proof}
	We define the set  $W$ given by
	$$
	W= \{ t \in \Omega: \vert f_1 (t)- f_2 (t) \vert  \le (1- \varepsilon) \bigl( f_1 (t) + f_2 (t) \bigr )\}.
	$$
	Clearly $W$ is a measurable subset of $\Omega$. We have that 
	\begin{align*}
	1 - \varepsilon ^2	&  \le  \Vert f_1 - f_2 \Vert_1 	 \\
	& =  \int _\Omega \vert f_1 - f_2 \vert \ d \mu \\
	&  =  \int _W\vert f_1 - f_2 \vert \; d \mu  + \int _{\Omega\backslash W}  \vert f_1 - f_2 \vert \ d \mu \\
	&   \le   (1- \varepsilon) \int _W ( f_1 + f_2 ) \; d \mu  + \int _{\Omega\backslash W}  ( f_1 + f_2 ) \ d \mu    \\
	&  \le 1-  \varepsilon \int _W ( f_1 + f_2 ) \; d \mu .
	\end{align*}
	As a consequence
	\begin{equation}
	\label{int-W-small}
	\int _W (f_1 + f_2 ) \; d \mu \le \varepsilon.
	\end{equation}
	Now we define the sets given by
	$$
	G_1 = \Omega \backslash W \cap \{ t \in \Omega: f_1 (t) > f_2 (t) \} \sem \text {and} \sem  G_2 = \Omega \backslash W \cap \{ t \in \Omega: f_2 (t) > f_1 (t) \}.
	$$
	Clearly $G_1$ and $G_2$ are measurable subsets and it is satisfied that
	$$
	(f_1- f_2) \chi _{G_1} = \vert f_1 - f_2 \vert \chi _{G_1}  > (1- \varepsilon) (f_1+f_2) \chi _{G_1}.
	$$
	So $ f_2 \chi_{G_1} \le (2 - \varepsilon)  f_2 \chi_{G_1}  \le \varepsilon f_1  \chi_{G_1}   $ and 
	\begin{equation}
	\label{int-G1-f2-small}
	\int _{G_1} f _2 \ d \mu \le  \int _{G_1}  \varepsilon f _1 \ d \mu \le  \varepsilon.
	\end{equation}
	By using the same argument with the function $f_1$ we obtain that 
	\begin{equation}
	\label{int-G2-f1-small}
	\int _{G_2} f _1 \ d \mu \le   \varepsilon.
	\end{equation}
	Since the subsets $W$, $G_1$ and $G_2$ are a partition of $\Omega $, in view of \eqref{int-W-small}	 and \eqref{int-G2-f1-small} we deduce that
	\begin{align}
	\label{f1-close-f1G1}
	\Vert f_1 - f_1 \chi _{ G_1} \Vert _1	& = \Vert f_1 \chi _{  W \cup G_2} \Vert _1 
	\nonumber	 \\
	& =    \Vert f_1 \chi _  W \Vert _1 + \Vert f_1 \chi _{  G_2} \Vert _1 \\
	& = \int _{W} f_1 \ d \mu  +   \int _{G_2} f_1 \ d \mu  \nonumber\\
	&   \le  2 \varepsilon. \nonumber
	\end{align}
	Since $f_1$ and $f_2$ satisfy the same  conditions we also have that
	\begin{equation}
	\label{f2-close-f2G2}
	\Vert f_2 - f_2 \chi _{ G_2} \Vert _1 \le  2 \varepsilon.
	\end{equation}
	By using that $f_1$ and $f_2$ are positive functions we deduce that
	\begin{align}
	\label{g1G1-g2G2-big}
	\Vert f_1 \chi _{G_1} + f_2 \chi_{ G_2} \Vert _1 & =  \Vert f_1 + f_2  \Vert _1  	 -  \Vert f_1  - f_1\chi _{G_1}  \Vert _1 -  \Vert f_2  - f_2\chi _{G_2}  \Vert _1
	\nonumber\\
	& \ge  \Vert f_1  - f_2  \Vert _1 - 4 \varepsilon  \sem \text{(by \eqref{f1-close-f1G1} and \eqref{f2-close-f2G2})} \\
	& \ge  1- \varepsilon ^2 - 4 \varepsilon   \nonumber \\
	&   > 1 - 5 \varepsilon > 0. \nonumber
	\end{align}
	Now we define the functions $g_1$ and $g_2$ by
	$$
	g_i= \frac{f_i \chi_{G_i}}{ \Vert f_1 \chi_{G_1} + f_2 \chi_{G_2} \Vert _1 }, \sem i=1,2.
	$$
	It is clear that $g_i \in L_1 (\mu)$  for $i=1,2$ and they are positive functions with disjoint supports.  It is also clear that $\Vert g_1 + g_2 \Vert _1 =1$.  
	
	Since $f_1$ and $f_2$ are positive functions we have that
	$\Vert f_1 \chi_{G_1} + f_2 \chi _{G_2} \Vert _1 \le  \Vert f_1  + f_2  \Vert _1 \le  1$, so for $i=1,2$ we have that 
	\begin{align}
	\label{comp-f1G1-normal}
	\Vert f_i \chi_{G_i} \Vert _1  \biggl \vert \frac{1}{ \Vert f_1 \chi_{G_1} + f_2 \chi _{G_2} \Vert _1 }  -1 \biggr \vert  & =  \Vert f_i \chi_{G_i} \Vert _1  \biggl (  \frac{1}{ \Vert f_1 \chi_{G_1} + f_2 \chi _{G_2} \Vert _1 } - 1  \biggr)  
	\nonumber\\
	& =    \frac{  \Vert f_i \chi_{G_i} \Vert _1  \bigl (   1-  \Vert f_1 \chi_{G_1} + f_2 \chi _{G_2} \Vert _1 \bigr)  } { \Vert f_1 \chi_{G_1} + f_2 \chi _{G_2} \Vert _1 }\\
	&\le  1- \Vert f_1 \chi_{G_1} + f_2 \chi _{G_2} \Vert _1  \nonumber.
	\end{align}
	For $i=1,2$ we estimate the distance from  $g_i$ to $f_i$ as follows
	\begin{align*}
	\Vert g_i - f_i \Vert_1	 & =  \Bigl \Vert  \frac{f_i \chi_{G_i}}{ \Vert f_1 \chi_{G_1} + f_2 \chi_{G_2} \Vert _1 } - f_i \Bigr \Vert _1  \\
	&\le  \Bigl \Vert  \frac{f_i \chi_{G_i}}{ \Vert f_1 \chi_{G_1} + f_2 \chi_{G_2} \Vert _1 }  - f_i \chi_{G_i}  \Bigr \Vert _1 +	 \Vert  f_i \chi_{G_i} - f_i  \Vert _1   \\
	&\le  1 -   \Vert f_1 \chi_{G_1} + f_2 \chi_{G_2} \Vert _1  + 2 \varepsilon  \sem \text{(by \eqref{comp-f1G1-normal},  \eqref{f1-close-f1G1} and \eqref{f2-close-f2G2})}\\
	&<  7 \varepsilon \seg \text{(by \eqref{g1G1-g2G2-big})}.
	\end{align*}
\end{proof}

\begin{theorem}
	\label{teo-BPBp-L-infty-L1}
	For any positive measures 	 $\mu$ and $\nu$, the pair $(L_\infty (\mu), L_1 (\nu))$ has the Bishop-Phelps-Bollobás property for positive operators. 
	
	Moreover,  in Definition \ref{def-BPBp-pos},  if the function $f_0$ where the operator $S$ is close to attain its norm  is positive, then the function $f_1$ where $T$ attains its norm is also positive.
\end{theorem}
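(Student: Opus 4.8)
\emph{The plan} is to reduce to the disjoint-support situation of Lemma \ref{le-dis-supp} and then build $T$ by a source-side routing that forces the images of the positive and negative parts of the test vector to live on disjoint sets. Fix $0<\varepsilon<1$, put $\eta=\eta(\varepsilon)=\delta^2$ for a small $\delta$ (of order $\varepsilon/40$) to be fixed at the end, and require $\delta<\tfrac15$ so that Lemma \ref{le-dis-supp} applies. Given a positive $S\in S_{L(L_\infty(\mu),L_1(\nu))}$ and $x_0\in S_{L_\infty(\mu)}$ with $\Vert S(x_0)\Vert_1>1-\eta$, write $x_0=x_0^+-x_0^-$ and set $f_1=S(x_0^+)\ge0$, $f_2=S(x_0^-)\ge0$. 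Since $\Vert |x_0|\Vert_\infty=1$, Lemma \ref{le-pos} gives $\Vert f_1+f_2\Vert_1=\Vert S(|x_0|)\Vert_1\le1$, while $\Vert f_1-f_2\Vert_1=\Vert S(x_0)\Vert_1>1-\delta^2$. Thus Lemma \ref{le-dis-supp} yields positive $g_1,g_2$ with disjoint supports $A,B$, with $\Vert g_1+g_2\Vert_1=1$ and $\Vert g_i-f_i\Vert_1<7\delta$.

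The crucial observation is that $S$ carries almost no mass where $|x_0|$ is bounded away from $1$. Writing $h_0=\1-|x_0|\ge0$, positivity of $S$ and Lemma \ref{le-pos} give $\Vert S(h_0)\Vert_1=1-\Vert S(|x_0|)\Vert_1<\eta$. Hence, setting $E=\{\,|x_0|>1-\delta\,\}$, on $E^c$ one has $\chi_{E^c}\le h_0/\delta$, so $\Vert S(\chi_{E^c})\Vert_1\le\Vert S(h_0)\Vert_1/\delta<\delta$. With $P=\{x_0>0\}$ and $N=\{x_0<0\}$, the set $E$ splits as $(E\cap P)\sqcup(E\cap N)$, and $E$ has positive measure because $\Vert x_0\Vert_\infty=1$. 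Also $0\le S(\chi_P)-f_1=S(h_0\chi_P)\le S(h_0)$, so $\Vert S(\chi_P)-f_1\Vert_1<\eta$, and symmetrically for $f_2$.

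Now define the candidate operator by routing $E\cap P$ into $A$ and $E\cap N$ into $B$:
\[
T(h)=\chi_A\,S(h\chi_{E\cap P})+\chi_B\,S(h\chi_{E\cap N}),\qquad h\in L_\infty(\mu).
\]
This $T$ is positive and its two summands are disjointly supported, so no cancellation occurs. Put $u_0=\chi_{E\cap P}-\chi_{E\cap N}+x_0\chi_{E^c}$; then $|u_0|=\1$ on $E$ and $|u_0|<1$ on $E^c$, so $u_0\in S_{L_\infty(\mu)}$ and $\Vert u_0-x_0\Vert_\infty<\delta<\varepsilon$. Since $T$ annihilates functions supported on $E^c$, we have $T(u_0)=\chi_A S(\chi_{E\cap P})-\chi_B S(\chi_{E\cap N})$ and $T(\1)=\chi_A S(\chi_{E\cap P})+\chi_B S(\chi_{E\cap N})$; as the two summands are positive and disjointly supported, $\Vert T(u_0)\Vert_1=\Vert T(\1)\Vert_1=\Vert T\Vert$ by Lemma \ref{le-pos}, i.e.\ $T$ attains its norm at $u_0$. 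Normalising $\widehat T=T/\Vert T\Vert$ then gives a positive norm-one operator attaining its norm at $u_0$.

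It remains to verify the two quantitative estimates, and this is where the work concentrates. For $\Vert T-S\Vert$ I would expand, for $\Vert h\Vert_\infty\le1$,
\[
(T-S)(h)=-\chi_{A^c}S(h\chi_{E\cap P})-\chi_{B^c}S(h\chi_{E\cap N})-S(h\chi_{E^c}),
\]
and bound each term in $L_1$: the last by $\Vert S(\chi_{E^c})\Vert_1<\delta$, and the first by $\int_{A^c}S(\chi_P)\le\int_{A^c}f_1+\Vert S(\chi_P)-f_1\Vert_1<7\delta+\eta$ (using $\int_{A^c}f_1\le\Vert f_1-g_1\Vert_1$, as $g_1$ vanishes off $A$), symmetrically for the middle term; summing gives $\Vert T-S\Vert=O(\delta)$. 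A parallel computation shows $\Vert T\Vert=\Vert T(\1)\Vert_1$ lies within $O(\delta)$ of $\Vert g_1+g_2\Vert_1=1$, so $\Vert \widehat T-T\Vert=|1-\Vert T\Vert|=O(\delta)$ and hence $\Vert \widehat T-S\Vert<\varepsilon$ once $\delta$ is small enough. Thus the main obstacle is not the construction but the control of $\Vert T-S\Vert$, which rests entirely on the two mass-smallness facts $\int_{A^c}f_1,\int_{B^c}f_2<7\delta$ and $\Vert S(\chi_{E^c})\Vert_1,\Vert S(\chi_P)-f_1\Vert_1=O(\delta)$, all expressing that $S$ concentrates where $|x_0|$ is close to $1$. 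Finally, for the ``moreover'' statement, if $x_0\ge0$ then $N=\varnothing$ and $u_0=\chi_{E\cap P}+x_0\chi_{E^c}\ge0$, so the norm-attaining vector is positive as claimed.
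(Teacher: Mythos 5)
Your argument is correct, and although it keeps the paper's overall skeleton (localise $S$ where $|x_0|$ is nearly extremal, produce disjointly supported targets via Lemma \ref{le-dis-supp}, route through those supports, normalise), the way you invoke the key lemma is genuinely different. The paper feeds the lemma the functions $S(\chi_A)$ and $S(\chi_B)$, with $A,B\subset\Omega_1$ the sets where $f_0$ is within $\eta$ of $-1$ and of $+1$, and must therefore first establish $\Vert S(\chi_B-\chi_A)\Vert_1\ge 1-4\eta$ through the chain \eqref{SfC-small}, \eqref{Sf0AB-approx}, \eqref{SB-A-small}. You feed it $S(x_0^+)$ and $S(x_0^-)$ instead, for which the hypotheses are immediate from linearity and positivity: $\Vert S(x_0^+)+S(x_0^-)\Vert_1=\Vert S(|x_0|)\Vert_1\le 1$ and $\Vert S(x_0^+)-S(x_0^-)\Vert_1=\Vert S(x_0)\Vert_1>1-\delta^2$, so no analogue of \eqref{SB-A-small} is needed. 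The price is a pair of transfer estimates relating the routed indicators $\chi_{E\cap P},\chi_{E\cap N}$ to $x_0^{\pm}$, and your device for them --- the single majorant $h_0=\1-|x_0|$, with $\Vert S(h_0)\Vert_1<\eta$ by Lemma \ref{le-pos}, which yields both the Chebyshev bound $\Vert S(\chi_{E^c})\Vert_1\le\Vert S(h_0)\Vert_1/\delta<\delta$ and the domination $0\le S(\chi_P)-S(x_0^+)=S(h_0\chi_P)\le S(h_0)$ --- is sound and plays the role of the paper's \eqref{SfC-small} and \eqref{Sf0AB-approx}. From there your estimates close correctly: $\int_{A^c}S(\chi_P)\,d\nu<7\delta+\eta$ because $g_1$ vanishes off $A$, hence $\Vert T-S\Vert\le 15\delta+2\eta$; disjointness of the two pieces of $T$ gives $\Vert T(u_0)\Vert_1=\Vert T(\1)\Vert_1=\Vert T\Vert$; and $\bigl\Vert T/\Vert T\Vert-S\bigr\Vert\le 2\Vert T-S\Vert$ exactly as in the paper. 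Net comparison: your route exploits the lattice structure of $L_\infty(\mu)$ directly and delivers a better modulus ($\eta(\varepsilon)$ of order $\varepsilon^2$ versus the paper's $(\varepsilon/58)^4$), while the paper's route keeps every object a characteristic function, so no transfer estimates are required and the norm-attainment identity at $\chi_B-\chi_A+f_0\chi_C$ is immediate. The only polish your sketch needs is to fix $\delta$ explicitly (e.g.\ $\delta=\varepsilon/40$, which also meets the constraint $\delta<\tfrac15$ of Lemma \ref{le-dis-supp}) and to note $\Vert T\Vert\ge 1-\Vert T-S\Vert>0$ before normalising.
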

\begin{proof}
	Assume that $(\Omega_1, \mu)$ is a measure space.
	Let $ 0 < \varepsilon < 1,$  $f_0 \in S_{L_\infty (\mu)} , S\in  S_{ L( L_\infty (\mu), L_1 (\nu))}	$ and assume that $S$ is a positive operator satisfying that 
	$$
	\Vert S(f_0) \Vert _1 > 1 - \eta^2,
	$$
	where $\eta = \bigl(\frac{ \varepsilon}{58}\bigr) ^2$.
	We define the sets $A,B$ and $C$ given by
	$$
	A=\{ t \in \Omega_1 : -1 \le f_0(t) < -1 + \eta \}, \sep B=\{ t \in \Omega_1 :  1- \eta <  f_0(t) \le 1 \} \sep 
	$$
	and
	$$
	\sep C=\{ t \in \Omega_1 : \vert f_0(t) \vert \le 1 - \eta \}. 
	$$
	By using that $S$ is a positive operator we obtain that
	\begin{align*}
	1-\eta ^2 	& <  \Vert S (f_0) \Vert _1  	 \\
	& =  \Vert S (f_0 \chi_A + f_0 \chi_B + f_0 \chi_C) \Vert _1   \\
	& \le    \Vert S (f_0 \chi_A ) \Vert _1 +  \Vert S (  f_0\chi_B ) \Vert _1 +  \Vert S ( f_0 \chi_C) \Vert _1     \\
	&    \le    \Vert S ( \chi_A ) \Vert _1 +  \Vert S (  \chi_B ) \Vert _1 +  (1- \eta )  \Vert S (  \chi_C) \Vert _1      \\
	& \le 1 - \eta   \Vert S (  \chi_C) \Vert _1  .
	\end{align*}
	Hence
	$
	\Vert S( \chi_C) \Vert _1 \le \eta .
	$
	By using again that $S$ is positive we deduce that
	\begin{equation}
	\label{SfC-small}
	\Vert S( f\chi_C) \Vert _1 \le   \Vert S( \chi_C) \Vert _1   \le  \eta , \sem  \forall f \in B_{ L_\infty (\mu)} .
	\end{equation}
	On the other hand it is trivially satisfied that 
	$$
	\Vert f_0 \chi_A + \chi_A \Vert _\infty \le \eta \sem \text{and} \sem  \Vert f_0 \chi_B - \chi_B \Vert _\infty \le \eta,
	$$ 
	so 
	\begin{equation}
	\label{Sf0AB-approx}
	\Vert S( f_0\chi_A +\chi_A)\Vert _1 \le    \eta   \sem \text{and} \sem  \Vert S( f_0\chi_B -\chi_B)\Vert _1 \le    \eta  .
	\end{equation}
	By using  the assumption  we obtain that 
	\begin{align*}
	1-\eta ^2 	& <  \Vert S (f_0) \Vert _1  	 \\
	& \le \Vert S (f_0 \chi_A + f_0 \chi_B  ) \Vert _1    +  \Vert S(f_0 \chi_C) \Vert _1  \\
	& \le    \Vert S (f_0 \chi_A  + \chi_A ) \Vert _1 +  \Vert S ( \chi_B - \chi_A) \Vert _1 +  \Vert S ( f_0 \chi_B - \chi_B) \Vert _1 + \Vert S(f_0 \chi_C )  \Vert _1   \\
	&    \le   \Vert S ( \chi_B - \chi_A) \Vert _1 + 3 \eta \sem \text{(by \eqref{Sf0AB-approx} and \eqref{SfC-small})}.
	\end{align*}
	As a consequence
	\begin{equation}
	\label{SB-A-small}
	\Vert S( \chi_B - \chi_A) \Vert _1  \ge   1- 4 \eta  .
	\end{equation}
	Since $S(\chi_A)$ and $S(\chi_B)$ are positive  functions and $\Vert S (\chi_A) + S (\chi_B)\Vert _1 \le 1$ we can apply Lemma \ref{le-dis-supp} and so there are two positive functions  $g_1$ and $g_2$ in $L_1 (\nu)$  satisfying the following conditions
	$$
	\Vert g_1- S(\chi_A) \Vert _1 <  14 \sqrt{\eta} = \frac{7 \varepsilon}{29},  \sem   \Vert g_2- S(\chi_B) \Vert _1  <    \frac{7 \varepsilon}{29} ,
	$$
	$$
	\supp g_1 \cap  \supp g_2 = \varnothing  \sem \text{and} \sem \Vert g_1+g_2 \Vert _1  =1.
	$$

	Assume that $\nu$ is a measure on $\Omega _2$. We obtain that
	\begin{equation}
	\label{S-A}
	\Vert  S (\chi_A) \chi _{ \Omega _2 \backslash \supp g_1} \Vert _1 =  \Vert  (g_1 - S(\chi_A))  \chi _{ \Omega _2 \backslash \supp g_1} \Vert _1 \le \Vert g_1 -  S (\chi_A)  \Vert _1 < \frac{7 \varepsilon}{29}
	\end{equation}
	and also
	\begin{equation}
	\label{S-B}
	\Vert  S (\chi_B) \chi _{ \Omega _2 \backslash \supp g_2} \Vert _1  < \frac{7 \varepsilon}{29}.
	\end{equation}
	
	Now we define the operator $V: L_\infty (\mu) \llll L_1 (\nu)$ as follows
	$$
	V(f)= S(f \chi_A) \chi _{\supp g_1} +  S(f \chi_B) \chi _{\supp g_2} \seg (f \in L_\infty (\mu)).
	$$
	Clearly $V$ is well defined and it is a positive operator since $S \ge 0$. So
	$$
	\Vert  V \Vert = \Vert V (\1)\Vert _1= \Vert S(\chi_A) \chi _{\supp g_1} +  S(\chi_B) \chi _{\supp g_2}  ) \Vert _1 \le \Vert S \Vert = 1.
	$$
	Now we estimate the norm of $V-S$. If $f \in B_{ L_\infty (\mu)}$ then we have that
	\begin{align*}
	\Vert (V -  S)(f) \Vert _1 & =  \Vert  S(f \chi_A) \chi _{\supp g_1} +  S(f \chi_B) \chi _{\supp g_2}   ) - S(f) \Vert _1 \\
	&  = \Vert  S(f \chi_A) \chi _{\supp g_1} +  S(f \chi_B) \chi _{\supp g_2}   ) - S(f \chi_A)  - S(f \chi_B)  -S(f \chi_C) \Vert _1 \\
	&\le \Vert  S(f \chi_A) \chi _{ \Omega _2 \backslash \supp g_1}  \Vert _1 +  \Vert  S(f \chi_B) \chi _{\Omega _2 \backslash\supp g_2}   )  \Vert _1 + \Vert S(f \chi_C) \Vert _1 \\
	&\le \Vert  S(\chi_A) \chi _{ \Omega _2 \backslash \supp g_1}  \Vert _1 +  \Vert  S( \chi_B) \chi _{\Omega _2 \backslash\supp g_2}   )  \Vert _1 + \Vert S(f \chi_C) \Vert _1 \\
	&<  \frac{ 14 \varepsilon}{29} + \eta  < \frac{\varepsilon}{2} \sem \text{(by \eqref{S-A},  \eqref{S-B} and \eqref{SfC-small})}.
	\end{align*}
	We proved that $\Vert V - S \Vert  < \frac{\varepsilon}{2}$  and so $\Vert V \Vert \ge 1 - \dfrac{\varepsilon}{2} > 0$. 
	Since $f_0 \in S_{ L_\infty (\mu)}$ the function $f_1$ given by $f_1 = \chi_B- \chi_A + f_0 \chi _C \in  S_{ L_\infty (\mu)}$ and satisfies
	$$
	\Vert f_1 - f_0 \Vert _\infty \le \eta < \varepsilon.
	$$
	Since $g_1$ and $g_2$ have disjoint supports we also have that
	\begin{align*}
		\Vert V (f_1) \Vert _1 & = \Vert S ( -\chi_A) \chi_{\supp g_1} +  S ( \chi_B) \chi_{\supp g_2} \Vert _1  \\
		& =  \Vert S ( \chi_A) \chi_{\supp g_1} +   S ( \chi_B) \chi_{\supp g_2} \Vert _1  \\
		&  = \Vert V(\1) \Vert _1 = \Vert V \Vert  . \\
	\end{align*}
	If we take $T= \frac{V}{\Vert V \Vert}$, the operator $T \in S_{L(L_\infty (\mu), L_1 (\nu)}$, is a positive operator, attains its norm at $f_1$ and satisfies that
	$$
	\Vert T - S \Vert \le  \Vert T - V \Vert +\Vert  V-S \Vert  = \bigl\vert 1 - \Vert V \Vert  \bigr \vert   +\Vert  V-S \Vert \le 2 \Vert V - S \Vert < \varepsilon.
	$$
	We proved that the pair $(L_\infty (\mu), L_1 (\nu))$  has the Bishop-Phelps-Bollobás property for positive operators. In case that $f_0 \ge 0$ the function $f_1$ also satisfies the same condition.
\end{proof}

   \section{A positive result for the Bishop-Phelps-Bollobás property for positive operators for  $(c_0, L_1 )$.}  		
   
   \begin{theorem}
   	\label{teo-BPBp-c0-L1}
   	For any positive measure 	 $\mu$, the pair $(c_0, L_1 (\mu))$ has the Bishop-Phelps-Bollobás property for positive operators. 
   	
   	Moreover,  in Definition \ref{def-BPBp-pos},  if the element  $x_0$  is positive, then the elemnent $u_0$ where $T$ attains its norm is also positive.
   \end{theorem}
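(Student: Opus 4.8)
The plan is to imitate the proof of Theorem \ref{teo-BPBp-L-infty-L1} almost verbatim, after first recording the structure of positive operators on $c_0$ that plays the role of Lemma \ref{le-pos}. Writing $h_n = S(e_n)$, positivity forces $h_n \ge 0$ for every $n$, and for $x = (a_n) \in c_0$ one has $S(x) = \sum_n a_n h_n$ with convergence in $L_1(\mu)$, since $\sum_{n=1}^N a_n e_n \to x$ in $c_0$. Because the $h_n$ are positive, the partial sums $\sum_{n=1}^N h_n = S(\sum_{n=1}^N e_n)$ increase pointwise and are bounded in $L_1$ by $\Vert S\Vert$, so by monotone convergence $H := \sum_n h_n \in L_1(\mu)$; the pointwise domination $|S(x)| \le \sum_n |a_n| h_n \le H$ for $x \in B_{c_0}$ then yields $\Vert S\Vert = \Vert H\Vert_1$, the substitute for Lemma \ref{le-pos}. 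I would fix $\eta = (\varepsilon/58)^2$ exactly as before and normalize so that $\Vert S\Vert = \Vert H\Vert_1 = 1$. Throughout, for an index set $N$ and $x \in c_0$ I write $x\chi_N = \sum_{n \in N} x_n e_n$.

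Next I split the indices according to the size of the coordinates of $x_0 = (a_n)$, setting $A = \{n : -1 \le a_n < -1 + \eta\}$, $B = \{n : 1 - \eta < a_n \le 1\}$ and $C = \{n : |a_n| \le 1 - \eta\}$. The crucial simplification over the $L_\infty$ case is that, since $a_n \to 0$, the sets $A$ and $B$ are finite; hence $\chi_A = \sum_{n\in A} e_n$ and $\chi_B = \sum_{n\in B} e_n$ genuinely lie in $c_0$ and the truncations $S(x\chi_A), S(x\chi_B)$ make sense as before. Writing $H_A = S(\chi_A)$, $H_B = S(\chi_B)$, $H_C = \sum_{n\in C} h_n$, the same three-term estimates as in Theorem \ref{teo-BPBp-L-infty-L1}---using $\Vert x_0\chi_A + \chi_A\Vert_\infty < \eta$, $\Vert x_0\chi_B - \chi_B\Vert_\infty < \eta$, and $|a_n| \le 1-\eta$ on $C$---yield $\Vert H_C\Vert_1 < \eta$ and $\Vert H_B - H_A\Vert_1 > 1 - 4\eta$. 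Since $H_A, H_B \ge 0$ and $\Vert H_A + H_B\Vert_1 \le 1$, Lemma \ref{le-dis-supp}, applied with parameter $2\sqrt\eta$, produces positive functions $g_1, g_2 \in L_1(\mu)$ with disjoint supports, $\Vert g_1 + g_2\Vert_1 = 1$ and $\Vert g_1 - H_A\Vert_1, \Vert g_2 - H_B\Vert_1 < 14\sqrt\eta = 7\varepsilon/29$.

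Then I would define $V(x) = S(x\chi_A)\chi_{\supp g_1} + S(x\chi_B)\chi_{\supp g_2}$, which is positive because $S \ge 0$, and check that $\Vert V\Vert = \Vert H_A\chi_{\supp g_1} + H_B\chi_{\supp g_2}\Vert_1 \le 1$. The bounds $\Vert g_i - H_i\Vert_1 < 7\varepsilon/29$ force $H_A$ and $H_B$ to carry $L_1$-mass less than $7\varepsilon/29$ off $\supp g_1$ and $\supp g_2$ respectively, and combining this with $|S(x\chi_A)| \le H_A$, $|S(x\chi_B)| \le H_B$ pointwise for $x \in B_{c_0}$ and $\Vert H_C\Vert_1 < \eta$, the computation from the previous proof gives $\Vert V - S\Vert < 14\varepsilon/29 + \eta < \varepsilon/2$. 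Finally I set $u_0 = \chi_B - \chi_A + x_0\chi_C$, which lies in $S_{c_0}$ (its sup-norm equals $1$, attained on the nonempty finite set $A \cup B$) and satisfies $\Vert u_0 - x_0\Vert_\infty \le \eta < \varepsilon$. Since $u_0\chi_A = -\chi_A$ and $u_0\chi_B = \chi_B$, and $g_1, g_2$ have disjoint supports, $\Vert V(u_0)\Vert_1 = \Vert H_A\chi_{\supp g_1}\Vert_1 + \Vert H_B\chi_{\supp g_2}\Vert_1 = \Vert V\Vert$, so $T = V/\Vert V\Vert$ is a positive norm-one operator attaining its norm at $u_0$ with $\Vert T - S\Vert \le 2\Vert V - S\Vert < \varepsilon$. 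For the ``moreover'' clause, if $x_0 \ge 0$ then $A = \varnothing$, whence $u_0 = \chi_B + x_0\chi_C \ge 0$.

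I expect the only genuinely new work to be the structural fact $\Vert S\Vert = \Vert \sum_n S(e_n)\Vert_1$ for positive $S \colon c_0 \to L_1(\mu)$ and the observation that $A$ and $B$ are finite so that $\chi_A, \chi_B \in c_0$ and the truncations are legitimate. Once this is in place, every subsequent estimate is formally identical to the proof of Theorem \ref{teo-BPBp-L-infty-L1}, so the main obstacle is careful bookkeeping rather than any new idea.
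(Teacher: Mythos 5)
Your proposal is correct and follows essentially the same route as the paper's proof: the same sets $A$, $B$, $C$ (with the same observation that $A$ and $B$ are finite), the same application of Lemma \ref{le-dis-supp} with parameter $2\sqrt{\eta}$ to $S(\chi_A)$ and $S(\chi_B)$, the same operator $V$ and norm-attaining point $u_0=\chi_B-\chi_A+x_0\chi_C$, and the same normalization $T=V/\Vert V\Vert$. The only cosmetic difference is that you control the tail set $C$ via the function $H_C=\sum_{n\in C}S(e_n)\in L_1(\mu)$ obtained by monotone convergence (and, more generally, via the representation $\Vert S\Vert=\Vert\sum_n S(e_n)\Vert_1$), whereas the paper works with $\lim_{n}\Vert S(\chi_{C_n})\Vert_1$ over finite truncations $C_n$ of $C$; by monotone convergence these are the same quantity.
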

   \begin{proof}
   	The proof of this result is similar  to the proof of Theorem \ref{teo-BPBp-L-infty-L1}. In any case we include it for the sake of completeness.
   	Throughout this proof  we denote by $\Vert \ \Vert $ the usual norm of $c_0$.

   	Assume that $\Omega$ is the set  such that  $(\Omega, \mu)$ is the measure
   	space considered for  $L_1(\mu)$.
   	Let $ 0 < \varepsilon < 1,$  $x_0 \in S_{c_0},$ $S\in  S_{ L( c_0, L_1 (\mu))}	$ and assume that $S$ is a positive operator satisfying that 
   	$$
   	\Vert S(x_0) \Vert _1 > 1 - \eta^2,
   	$$
   	where $\eta = \bigl(\frac{ \varepsilon}{58}\bigr) ^2$.
   	We define the sets $A,B$ and $C$ given by
   	$$
   	A=\{ k \in \N  : -1 \le x_0(k) < -1 + \eta \}, \sep B=\{ k \in \N :  1- \eta <  x_0(k) \le 1 \} \sep 
   	$$
   	and
   	$$
   	\sep C=\{ k \in \N : \vert x_0(k) \vert \le 1 - \eta \}. 
   	$$
   	Since $x_0\in S_{c_0}$ the sets  $A$  and $B$ are finite and $\{A,B,C\}$ is a partition  of $\N$.
   	
   	For each positive integer $n$ we denote by $C_n = C \cap  \{k \in \N : k \le n\}$, which is a finite subset of $\N$.
   	By using that $S$ is a positive operator in $S_{ L(c_0, L_1 (\mu))}$ we obtain that
   	\begin{align*}
   	1-\eta ^2 	& <  \Vert S (x_0) \Vert _1  	 \\
   	& =  \Vert S (x_0 \chi_A + x_0 \chi_B + x_0 \chi_C) \Vert _1   \\
   	& \le    \Vert S (x_0 \chi_A ) \Vert _1 +  \Vert S (  x_0\chi_B ) \Vert _1 +  \Vert S ( x_0 \chi_C) \Vert _1     \\
   	&    \le    \Vert S ( \chi_A ) \Vert _1 +  \Vert S (  \chi_B ) \Vert _1 +  (1- \eta )   \lim_{n}  \bigl \{ \Vert S (  \chi_{C_n}) \Vert _1 \bigr \}      \\
   	& \le 1 - \eta    \lim_{n}  \bigl \{ \Vert S (  \chi_{C_n}) \Vert _1 \bigr \}  .
   	\end{align*}
   	Hence
   	$
   	 \lim_{n}  \bigl \{ \Vert S (  \chi_{C_n}) \Vert _1 \bigr \}  \le \eta .
   	$
   	Since $S$ is positive we get that 	
   	\begin{equation}
   	\label{SxC-small}
   	\Vert S( x\chi_C) \Vert _1 \le   \lim_{n}  \bigl \{ \Vert S (  \chi_{C_n}) \Vert _1 \bigr \}  \le   \eta , \sem  \forall x \in B_{ c_0}.
   	\end{equation}
   	On the other hand it is trivially satisfied that 
   	$$
   	\Vert x_0 \chi_A + \chi_A \Vert  \le \eta \sem \text{and} \sem  \Vert x_0 \chi_B - \chi_B \Vert  \le \eta,
   	$$ 
   	and so 
   	\begin{equation}
   	\label{Sx0AB-approx}
   	\Vert S( x_0\chi_A +\chi_A)\Vert _1 \le    \eta   \sem \text{and} \sem  \Vert S( x_0\chi_B -\chi_B)\Vert _1 \le    \eta  .
   	\end{equation}
   	In view of the  assumption, since $\{A,B,C\}$ is a partition of $\N$   we obtain that 
   	\begin{align*}
   	1-\eta ^2 	& <  \Vert S (x_0) \Vert _1  	 \\
   	& \le \Vert S (x_0 \chi_A + x_0 \chi_B  ) \Vert _1    +  \Vert S(x_0 \chi_C) \Vert _1  \\
   	& \le    \Vert S (x_0 \chi_A  + \chi_A ) \Vert _1 +  \Vert S ( \chi_B - \chi_A) \Vert _1 +  \Vert S ( x_0 \chi_B - \chi_B) \Vert _1 + \Vert S(x_0 \chi_C )  \Vert _1   \\
   	&    \le   \Vert S ( \chi_B - \chi_A) \Vert _1 + 3 \eta \sem \text{(by \eqref{Sx0AB-approx} and \eqref{SxC-small})}.
   	\end{align*}
   	Hence
   	\begin{equation}
   	\label{c0-SB-A-big}
   	\Vert S( \chi_B - \chi_A) \Vert _1  \ge   1- 4 \eta  .
   	\end{equation}
   	Now we can apply  Lemma \ref{le-dis-supp}  to the positive  functions  $S(\chi_A)$ and $S(\chi_B)$  since $\Vert S (\chi_A) + S (\chi_B)\Vert _1 \le \Vert S \Vert =1 $. So  there exist  two positive functions  $g_1$ and $g_2$ in $L_1 (\mu)$  satisfying the following conditions
   	$$
   	\Vert g_1- S(\chi_A) \Vert _1 < \frac{7 \varepsilon}{29},  \sem   \Vert g_2- S(\chi_B) \Vert _1 <    \frac{7 \varepsilon}{29} ,
   	$$
   	$$
   	\supp g_1 \cap  \supp g_2 = \varnothing  \sem \text{and} \sem \Vert g_1+g_2 \Vert _1  =1.
   	$$
   	As a consequence,   we have that 
   	\begin{equation}
   	\label{c0-S-A}
   	\Vert  S (\chi_A) \chi _{ \Omega  \backslash \supp g_1} \Vert _1 =  \Vert  (g_1 - S(\chi_A))  \chi _{ \Omega \backslash \supp g_1} \Vert _1 \le \Vert g_1 -  S (\chi_A)  \Vert _1 < \frac{7 \varepsilon}{29}
   	\end{equation}
   	and also
   	\begin{equation}
   	\label{c0-S-B}
   	\Vert  S (\chi_B) \chi _{ \Omega \backslash \supp g_2} \Vert _1  < \frac{7 \varepsilon}{29}.
   	\end{equation}
   	
   	We define the operator $U: c_0 \llll L_1 (\mu)$ by 
   	$$
   	U(x)= S(x \chi_A) \chi _{\supp g_1} +  S(x \chi_B) \chi _{\supp g_2} \seg (x \in c_0).
   	$$
   	The operator $U$ is linear, bounded and  positive.   Since $U(x)= U(x \chi _{A \cup B})$ for any element $x \in c_0$  and $A\cup B$ is finite  we obtain that 
   	$$
   	\Vert  U \Vert =  \Vert U(\chi_{A \cup B})  \Vert _1  = \Vert S(\chi_A) \chi _{\supp g_1} +  S(\chi_B) \chi _{\supp g_2}  ) \Vert _1 \le  \Vert S \Vert = 1.
   	$$
   	Now we estimate the  distance between $U$ and $S$. For an element  $x \in B_{ c_0}$ it is satisfied  
   	\begin{align*}
   	\Vert (U -  S)(x) \Vert _1 & =  \Vert  S(x \chi_A) \chi _{\supp g_1} +  S(x \chi_B) \chi _{\supp g_2}   ) - S(x) \Vert _1 \\
   	&  = \Vert  S(x \chi_A) \chi _{\supp g_1} +  S(x \chi_B) \chi _{\supp g_2}   ) - S(x \chi_A)  - S(x \chi_B)  -S(x \chi_C) \Vert _1 \\
   	&\le \Vert  S(x \chi_A) \chi _{ \Omega  \backslash \supp g_1}  \Vert _1 +  \Vert  S(x \chi_B) \chi _{\Omega \backslash\supp g_2}   )  \Vert _1 + \Vert S(x \chi_C) \Vert _1 \\
   	&\le \Vert  S(\chi_A) \chi _{ \Omega  \backslash \supp g_1}  \Vert _1 +  \Vert  S( \chi_B) \chi _{\Omega  \backslash\supp g_2}   )  \Vert _1 + \Vert S(x \chi_C) \Vert _1 \\
   	&<  \frac{ 14 \varepsilon}{29} + \eta  < \frac{\varepsilon}{2} \sem \text{(by \eqref{c0-S-A},  \eqref{c0-S-B} and \eqref{SxC-small})}.
   	\end{align*}
   	We proved that $\Vert U - S \Vert  < \frac{\varepsilon}{2}$  and so $\Vert U \Vert \ge 1 - \dfrac{\varepsilon}{2} > 0$. 
   	Since $x_0 \in S_{c_0}$ the element  $u_0$ given by $u_0 = \chi_B- \chi_A + x_0 \chi _C \in  S_{ c_0}$ and satisfies
   	$$
   	\Vert u_0 - x_0 \Vert \le \eta < \varepsilon.
   	$$
   	Since $g_1$ and $g_2$ have disjoint supports we also have that
   	\begin{eqnarray*}
   		\Vert U (u_0) \Vert _1 & = \Vert S ( -\chi_A) \chi_{\supp g_1} +  S ( \chi_B) \chi_{\supp g_2} \Vert _1  \\
   		& =  \Vert S ( \chi_A) \chi_{\supp g_1}  +   S ( \chi_B) \chi_{\supp g_2} \Vert _1  \\
   		&  = \Vert U( \chi_{A  \cup B})\Vert _1 = \Vert U \Vert  .
   	\end{eqnarray*}
   	If we take $T= \frac{U}{\Vert U \Vert}$, the operator $T \in S_{L(c_0, L_1 (\mu))}$, is a positive operator, attains its norm at $u_0$ and satisfies that
   	$$
   	\Vert T - S \Vert \le  \Vert T - U \Vert +\Vert  U-S \Vert  =  \bigl\vert 1 - \Vert U \Vert  \bigr \vert   +\Vert  U-S \Vert \le 2 \Vert U - S \Vert < \varepsilon.
   	$$
   	We proved that the pair $(c_0, L_1 (\mu))$  has the Bishop-Phelps-Bollobás property for positive operators.  Notice that in case that $x_0 $ is positive,  the element  $u_0$  is also positive.
   \end{proof}
   
Lastly we provide an example showing that the property that we considered is not trivial.  

\begin{example}
Let $Y=c_0$ as a set, endowed with the norm given by
$$
\tnorm x \tnorm = \Vert x \Vert + \Bigl\Vert \Bigl\{  \frac{ x_n}{2^n}\Bigr\} \Bigr \Vert _2 \seg (x \in c_0),
$$
where $\Vert \ \Vert$  is the usual norm of $c_0$. Then the pair $(c_0,Y)$ does not satisfy the  Bishop-Phelps-Bollobás property for positive operators. 
\end{example}
\begin{proof}
It is clear that $\nnn$ is a norm equivalent to the usual norm  of $c_0$ and it is a lattice norm on $Y$. Also the space $Y$ is strictly convex. So the formal identity from $c_0$ to $Y$ cannot be approximated by norm attaining operators by \cite[Proposition 4]{Lin}. Since the formal identity is a positive operator we are done.
\end{proof}

\vskip3mm

\textbf{Acknowledgement.} The authors kindly thank  to M. Masty{\l}o who suggested to study the property considered in this paper during a research stay in the University of Granada.

\bibliographystyle{amsalpha}

\end{document}